\pdfoutput=1
\RequirePackage{ifpdf}
\ifpdf 
\documentclass[pdftex]{sigma}
\else
\documentclass{sigma}
\fi

\usepackage[all]{xy}

\newcommand{\N}{\mathbb N}
\newcommand{\Z}{\mathbb Z}
\newcommand{\Q}{\mathbb Q}
\newcommand{\Zbar}{\overline{\Z}}
\newcommand{\Qbar}{\overline{\Q}}
\newcommand{\F}{\mathbb F}

\newcommand{\Zmod}[1]{\overline{\Z/{#1}\Z}}

\newcommand{\legendre}[2]{\left(\frac{#1}{#2}\right)}

\newcommand{\abcd}[4]{\left(\begin{smallmatrix}#1&#2\\#3&#4\end{smallmatrix}\right)}

\newtheorem{que}{Question}

\DeclareMathOperator{\GL}{GL} \DeclareMathOperator{\SL}{SL}  
\DeclareMathOperator{\Gal}{Gal}   
  \DeclareMathOperator{\Tr}{Tr} 
\DeclareMathOperator{\Frob}{Frob}

\begin{document}
\allowdisplaybreaks

\newcommand{\arXivNumber}{1802.04976}

\renewcommand{\thefootnote}{}

\renewcommand{\PaperNumber}{057}

\FirstPageHeading

\ShortArticleName{A Peculiar Eigenform Modulo 4}

\ArticleName{Dihedral Group, 4-Torsion on an Elliptic Curve,\\ and a Peculiar Eigenform Modulo 4\footnote{This paper is a~contribution to the Special Issue on Modular Forms and String Theory in honor of Noriko Yui. The full collection is available at \href{http://www.emis.de/journals/SIGMA/modular-forms.html}{http://www.emis.de/journals/SIGMA/modular-forms.html}}}

\Author{Ian KIMING~$^\dag$ and Nadim RUSTOM~$^\ddag$}

\AuthorNameForHeading{I.~Kiming and N.~Rustom}

\Address{$^\dag$~Department of Mathematical Sciences, University of Copenhagen,\\
\hphantom{$^\dag$}~Universitetsparken 5, DK-2100 Copenhagen {\O}, Denmark}
\EmailD{\href{mailto:kiming@math.ku.dk}{kiming@math.ku.dk}}
\URLaddressD{\url{http://web.math.ku.dk/~kiming/}}

\Address{$^\ddag$~Department of Mathematics, Ko\c{c} University, Rumelifeneri Yolu,\\
\hphantom{$^\ddag$}~34450, Sariyer, Istanbul, Turkey}
\EmailD{\href{mailto:restom.nadim@gmail.com}{restom.nadim@gmail.com}}

\ArticleDates{Received February 28, 2018, in final form June 04, 2018; Published online June 13, 2018}

\Abstract{We work out a non-trivial example of lifting a so-called weak eigenform to a true, characteristic $0$ eigenform. The weak eigenform is closely related to Ramanujan's tau function whereas the characteristic $0$ eigenform is attached to an elliptic curve defined over~$\Q$. We produce the lift by showing that the coefficients of the initial, weak eigenform (almost all) occur as traces of Frobenii in the Galois representation on the $4$-torsion of the elliptic curve. The example is remarkable as the initial form is known not to be liftable to any characteristic $0$ eigenform of level~$1$. We use this example as illustrating certain questions that have arisen lately in the theory of modular forms modulo prime powers. We give a~brief survey of those questions.}

\Keywords{congruences between modular forms; Galois representations}

\Classification{11F33; 11F80}

\renewcommand{\thefootnote}{\arabic{footnote}}
\setcounter{footnote}{0}

\section{Introduction}\label{section:intro} In what follows, $\ell$ will always denote a prime number, and we will denote by $G_{\Q}$ the absolute Galois group of $\Q$:
\begin{gather*}
G_{\Q} := \Gal\big(\Qbar/\Q\big) .
\end{gather*}

We will be considering the modular form
\begin{gather*}
f:=E_4^6\Delta + 2\Delta^3
\end{gather*}
on $\SL_2(\Z)$ of weight $36$. Here $E_4$ is the classical Eisenstein series
\begin{gather*}
E_4 := 1 + 240 \sum_{n=1}^{\infty} \bigg( \sum_{d\mid n} d^3 \bigg) q^n
\end{gather*}
($q := e^{2\pi i z}$), and $\Delta$ the unique normalized eigenform on $\SL_2(\Z)$ of weight $12$
\begin{gather*}
\Delta = \sum_{n=1}^{\infty} \tau(n) q^n
\end{gather*}
with $\tau$ the Ramanujan tau function. Thus, the $q$-expansion of $f$ begins like this
\begin{gather*}
f = q + 1416 q^2 + 842654 q^3 + 271386544 q^4 + 50558981478 q^5 \\
\hphantom{f=}{} + 5356057726176 q^6 + 290719505955308 q^7 + \cdots.
\end{gather*}

For any modular form $h$ (of some weight on some $\Gamma_1(N)$) we will denote by $a_n(h)$ the coefficient of $q^n$ in the $q$-expansion of $h$. Thus, $a_n(\Delta) = \tau(n)$ for instance.

The result of this paper is a non-trivial observation concerning the reduction of the form $f$ modulo $4$. Let us first explain the result, and then return in the next section to a discussion of background, general motivations, why the form $f$ is particularly interesting, as well as some of the questions that form the motivating background for studying the particular form $f$. It is the study of those questions that is our main purpose.

Before formulating our result, let us first explain what we mean by ``reduction of'' and ``congruences between'' modular forms. Assuming for the moment that the forms in question have integral $q$-expansions, the reduction modulo $m$ of a form is by definition the formal power series in $(\Z/m\Z)[[q]]$ that is the result of reducing the $q$-expansion of the form term-wise modulo~$m$. Accordingly, that two forms are congruent modulo $m$ means that their $q$-expansions agree term-wise modulo~$m$.

The form $f$ modulo $4$ is an eigenform for all Hecke operators meaning that we have
\begin{gather*}
T_n f \equiv a_n(f) f \pmod{4}
\end{gather*}
for all $n\in\N$ where $T_n$ is the $n$th Hecke operator at level $1$. For the short proof of this, see Proposition \ref{prop:f_is_weak} in the next section. The form $(f\pmod{4})$ is an example of what we will call a~weak eigenform (in this case modulo~$4$), cf.\ Definition \ref{def:weak_strong_eigenform} below.

The form mod $4$ does not lift to an eigenform of characteristic $0$ on $\SL_2(\Z)$: by results of Hatada, any eigenform on $\SL_2(\Z)$ is congruent modulo $4$ to $\Delta$ (this follows from Theorems~3 and~4 of~\cite{hatada1977b} together with the fact that the coefficients $\tau(\ell)$ of $\Delta$ satisfy $\tau(\ell) \equiv 1 + \ell \pmod{4}$ for odd primes $\ell$, and that $\tau(2) \equiv 0 \pmod{4}$ (see, e.g.,~\cite{swinnerton-dyer})).

Nevertheless, the result of this paper shows that $f$ does in fact lift to a classical newform, albeit at level $128$:

\begin{theorem}\label{thm:main} With $f:=E_4^6\Delta + 2\Delta^3$ and $E$ the elliptic curve
\begin{gather*}
E \colon \ y^2 = x^3+x^2+x+1 ,
\end{gather*}
we have that the $2$-adic Galois representation $\rho_2$ attached to $E$ is unramified outside $2$ and that
\begin{gather*}
\Tr \rho_2(\Frob_\ell) \equiv a_\ell(f) \pmod{4}
\end{gather*}
for odd primes $\ell$.

In fact,
\begin{gather*}
f\equiv g \pmod{4}
\end{gather*}
with $g$ the cusp form of weight $2$ and level $128$ attached to $E$.
\end{theorem}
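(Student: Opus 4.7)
The theorem has three components: (a) $\rho_2$ is unramified outside $2$; (b) the Frobenius-trace congruence $\Tr \rho_2(\Frob_\ell) \equiv a_\ell(f) \pmod{4}$ for odd primes $\ell$; (c) the full congruence $f \equiv g \pmod{4}$. My plan is to dispatch (a) by a direct discriminant computation: for $E \colon y^2 = (x+1)(x^2+1)$ the discriminant of the model is a power of $2$, so $E$ has good reduction at every odd prime, and N\'eron--Ogg--Shafarevich yields (a).

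For (b), the heart of the argument, I would first observe that $x^3+x^2+x+1=(x+1)(x^2+1)$, so $E[2]$ is defined over $\Q(i)$; in a basis whose first vector is the rational $2$-torsion point $(-1,0)$, the mod-$2$ reduction of $\rho_2$ is upper triangular with diagonal characters $1$ and $\chi$, where $\chi$ is the quadratic character of $\Gal(\Q(i)/\Q)$. I would then analyse $E[4]$ by applying the duplication formula $[2]P = T$ to each $T \in E[2]$ in order to identify $\Q(E[4])$ explicitly, keeping in mind that the Weil pairing already forces $\mu_4 \subset \Q(E[4])$ (compatibly with $\Q(i) \subset \Q(E[4])$). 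My expectation, in line with the paper's title, is that $\Gal(\Q(E[4])/\Q)$ will turn out to be dihedral, so that the image of $\bar\rho := \rho_2 \bmod 4$ in $\GL_2(\Z/4\Z)$ is a small explicit group whose conjugacy classes and traces can be enumerated. From this I would read off $\Tr \bar\rho(\Frob_\ell) \pmod{4}$ for odd $\ell$ as a function of congruence conditions on $\ell$ modulo a small power of $2$.

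On the $f$-side, since every non-constant coefficient of $E_4$ is divisible by $240$ and hence by $4$, one has $E_4 \equiv 1 \pmod{4}$, so $f \equiv \Delta + 2\Delta^3 \pmod{4}$. I would then invoke Hatada's congruence $\tau(\ell) \equiv 1 + \ell \pmod{4}$ for the $\Delta$-contribution at an odd prime $\ell$, and compute $a_\ell(\Delta^3) \pmod{2}$ from the classical identity $\Delta \equiv \sum_{n \geq 0} q^{(2n+1)^2} \pmod{2}$ (Jacobi) cubed. Matching the resulting explicit expression for $a_\ell(f) \pmod{4}$ against the Frobenius-trace table from the previous step closes out (b). This matching is the step I expect to be the main obstacle: it is where the very particular linear combination $E_4^6\Delta + 2\Delta^3$ has to conspire with the $4$-torsion geometry of the specific curve $E$, and one has to see that Hatada's formula and the dihedral trace table interlock correctly.

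To deduce (c) from (b) I would argue as follows. Both $f \bmod 4$ and $g \bmod 4$ are normalised Hecke eigenforms (in the weak and classical senses, respectively). Part (b) gives agreement of eigenvalues at all odd primes. At $\ell = 2$ one checks directly that $a_2(f) = 1416 \equiv 0 \pmod{4}$ and $a_2(g) = 0$ (since $E$ has additive reduction at $2$ and $g$ has level $128 = 2^7$). Moreover, mod $4$ the Hecke recursion at $\ell = 2$ reduces in both cases to $a_{2^{k+1}} \equiv a_2 \cdot a_{2^k} \pmod{4}$ (the correction term $2^{w-1}$ vanishes mod $4$ for $w = 36$, and there is no correction for $g$ because $2 \mid 128$), so $a_{2^k} \equiv 0 \pmod{4}$ for all $k \geq 1$ in both forms. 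Multiplicativity then propagates the congruence to every $n$, yielding $f \equiv g \pmod{4}$.
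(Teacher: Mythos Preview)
Your plan is essentially the paper's own proof: compute $a_\ell(f)\pmod 4$ via $E_4\equiv 1\pmod 4$, Kolberg/Hatada for $\tau(\ell)$, and $\Delta\equiv\sum q^{(2m+1)^2}\pmod 2$; independently compute $\Q(E[4])$ by halving the visible $2$-torsion, identify $\Gal(\Q(E[4])/\Q)\cong D_4$, tabulate Frobenius traces in $\GL_2(\Z/4\Z)$ by residue class of $\ell\pmod 8$; match; then pass from prime coefficients to all $n$ by multiplicativity and the Hecke recursion at $2$.

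The one place your sketch is underspecified is precisely the step you flag as ``the main obstacle''. Cubing the Jacobi congruence gives $a_\ell(\Delta^3)\equiv q_3(\ell)\pmod 2$, where $q_3(\ell)$ is the number of representations of $\ell$ as a sum of three odd squares. For odd $\ell\not\equiv 3\pmod 8$ this vanishes trivially, but for $\ell\equiv 3\pmod 8$ you must show $q_3(\ell)$ is \emph{odd}, and this is not a formal consequence of the theta identity. The paper handles it via the Eisenstein--Dirichlet formula for $r_3(n)$: for a prime $\ell\equiv 3\pmod 8$ one has $q_3(\ell)=\tfrac{1}{8}r_3(\ell)=R-N=2R-\tfrac{\ell-1}{2}$, where $R$ (resp.\ $N$) counts quadratic residues (resp.\ nonresidues) in $[1,\tfrac{\ell-1}{2}]$; since $\tfrac{\ell-1}{2}$ is odd, $q_3(\ell)$ is odd. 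With that ingredient supplied, your outline goes through exactly as in the paper, and your deduction of the full congruence $f\equiv g\pmod 4$ from the prime-indexed one (checking $a_2$ separately and using that $2^{35}\equiv 0\pmod 4$ kills the correction term in the Hecke recursion) is correct and in fact spells out what the paper leaves implicit.
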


Of course, a possible, quick and brutal proof of Theorem \ref{thm:main} would be to just verify that $a_n(f) \equiv a_n(g) \pmod{4}$ for sufficiently many $n$, cf.\ the ``generalized Sturm bounds'' of~\cite{ckr}. But such a proof would not reveal how the curve $E$, and consequently the form $g$, was found. The proof that we give below in Section~\ref{section:main} gives more information concerning that point, as well as information such as a formula for $a_\ell(f) \pmod{4}$ and the structure of $\rho_2 \pmod{4}$.

\section{Background, motivations, questions}\label{section:background} The background for our study of the form $f$ is the theory of modular forms modulo prime powers that has attracted some attention in recent years, cf.\ for instance the papers \cite{ck,ckr, ckw,krw,nadim_filtrations,nadim_level1,XaGa,panos_thesis,panos_gabor_survey}. Before reviewing some of the questions that have arisen recently, let us set up some notation.

Let $m,N\in\N$ and let $p$ be a prime number not dividing $N$. We fix algebraic closures $\Qbar$, $\Qbar_p$ of $\Q$ and $\Q_p$, respectively, as well as an embedding $\Qbar \hookrightarrow \Qbar_p$. In $\Qbar_p$ we have the subring $\Zbar_p$, the elements integral over $\Z_p$. Let $v_p$ be the normalized valuation on $\Qbar_p$.

The ring
\begin{gather*}
\Zmod{p^m} = \Zbar_p/\big\{x \in \Zbar_p \,|\, v_p(x)>m-1\big\},
\end{gather*}
introduced in \cite{XaGa}, utilized in~\cite{ckw}, is the natural target for ``reductions modulo $p^m$'' when the things that are being reduced are in $\Zbar_p$, but not necessarily in $\Z_p$; cf.\ the discussions in the papers just cited.

Thus, to say that two modular forms with $q$-expansions in $\Zbar_p$ are ``congruent modulo $p^m$'' means that their $q$-expansions term-wise have the same reduction in $\Zmod{p^m}$.

Considering modular forms $h$ on $\Gamma_1(N)$ and with coefficients in $\Zbar_p$ we will now consider various ways in which the reduction $(h \pmod{p^m}$ of $h$ modulo $p^m$ will be called an ``eigenform'' for the full Hecke algebra. For this, it will be convenient for us, as well as sufficient for our purposes, to restrict our attention to normalized cusp forms, i.e., cusp forms $h$ with $a_1(h) = 1$.

\begin{definition}\label{def:weak_strong_eigenform} Let $h$ be a normalized cusp form on $\Gamma_1(N)$, some weight, and coefficients in $\Zbar_p$. We say that the reduction $(h \pmod{p^m}$ of $h$ modulo $p^m$ is a weak eigenform if
\begin{gather*}
(T_n h \pmod{p^m}) = a_n(h)\cdot (h \pmod{p^m})
\end{gather*}
for all $n$ with $T_n$ the $n$th Hecke operator at level $N$ and the weight in question.

We say that $(h \pmod{p^m}$ is a strong eigenform (relative to the fixed level $N$) if there exists a classical, normalized eigenform $h_1$ on $\Gamma_1(N)$ and some weight such that
\begin{gather*}
(h \pmod{p^m}) = (h_1 \pmod{p^m}) .
\end{gather*}
\end{definition}

There is a third, weaker sense of the notion of an eigenform mod $p^m$, namely the notion of a ``dc-weak eigenform''; ``dc'' stands here for ``divided congruence'', and it is a notion of being an eigenform that involves the Hecke algebras simultaneously at various weights, but still at the fixed level $N$. It is not necessary for the purposes of this paper to give the precise definition which can be found in \cite[Section 1]{ckw}. At a fixed level $N$, we have
\begin{gather*}
\{ \mbox{strong eigenforms mod $p^m$}\} \subseteq \{\mbox{weak eigenforms mod $p^m$}\} \\
\hphantom{\{ \mbox{wtrong eigenforms mod $p^m$}\}}{} \subseteq \{ \mbox{dc-weak eigenforms mod $p^m$}\},
\end{gather*}
and the inclusions can in fact be strict: the first examples of dc-weak eigenforms that are not weak can be found in \cite{nadim_level1} (Theorem 5.4 and the consequences of it; the simplest example is the form $\Delta + 2d\Delta$ modulo $4$ where $d:=(E_4 -1)/16$.)

As an example of a weak eigenform that is not strong, as noted in the introduction above, one can consider the principal object of this paper, i.e., the modular form $f:=E_4^6\Delta + 2\Delta^3$ considered modulo $4$ (and so we have $N=1$, $p=2$, $m=2$): we already noted above that, due to a classical result of Hatada, $(f\pmod{4})$ can not be the reduction of any normalized eigenform of any weight on $\SL_2(\Z)$, i.e., $(f\pmod{4})$ is not a strong eigenform in the sense of Definition~\ref{def:weak_strong_eigenform}. But we have the following.

\begin{proposition}\label{prop:f_is_weak} The form $(f\pmod{4})$ is a weak eigenform modulo $4$.
\end{proposition}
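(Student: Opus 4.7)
The plan is to verify $T_n f \equiv a_n(f)\,f\pmod 4$ by reducing to primes and then to a mod-$2$ identity. Since the Hecke operators $T_p$ (for primes~$p$) together with the identities $T_{mn}=T_mT_n$ for $\gcd(m,n)=1$ and $T_{p^{k+1}}=T_pT_{p^k}-p^{w-1}T_{p^{k-1}}$ (at weight~$w$) generate the full Hecke algebra as operators on $q$-expansions, it suffices to verify $T_p f\equiv a_p(f)\,f\pmod 4$ for every prime~$p$; the full condition then follows by induction on the number of prime factors of~$n$, with the coefficient of~$q$ on both sides propagating the needed recursions on $a_n(f)$ modulo~$4$.

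First, I would reduce $f$ modulo~$4$ explicitly. Since $240\equiv 0\pmod{16}$, we have $E_4\equiv 1\pmod{16}$ and hence $E_4^6\equiv 1\pmod{16}$, giving
\begin{gather*}
f\equiv\Delta+2\Delta^3\pmod{16}.
\end{gather*}
The Freshman's dream applied to $\Delta=q\prod(1-q^n)^{24}$ gives $\Delta^2\equiv\Delta(q^2)\pmod 2$, so $2\Delta^3\equiv 2\Delta(q)\Delta(q^2)\pmod 4$. Moreover, the $q$-expansion formula for $T_p$ at weight~$36$ agrees with the one at weight~$12$ modulo~$4$: the two differ by $(p^{35}-p^{11})V_p$ with $V_p\colon h(q)\mapsto h(q^p)$, and $p^{35}-p^{11}=p^{11}(p^{24}-1)$ is divisible by~$8$ for odd~$p$ (since $p^2\equiv 1\pmod 8$), while $p^{35}$ and $p^{11}$ are both divisible by~$4$ for $p=2$. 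Hence the identity $T_p\Delta=\tau(p)\Delta$ holds modulo~$4$ at weight~$36$ as well. Combining this with $a_p(f)\equiv\tau(p)+2\,a_p(\Delta^3)\pmod{16}$ and expanding both sides of $T_pf\equiv a_p(f)\,f\pmod 4$ by linearity yields, after cancellation, the equivalent condition
\begin{gather*}
T_p\Delta^3 \equiv \tau(p)\Delta^3 + a_p(\Delta^3)\Delta \pmod 2.
\end{gather*}

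Since $\tau(n)$ is odd precisely when~$n$ is an odd square (see, e.g., \cite{swinnerton-dyer}), we have $\tau(p)\equiv 0\pmod 2$ for every prime~$p$, and the target simplifies further to $T_p\Delta^3\equiv a_p(\Delta^3)\Delta\pmod 2$. This last identity is the main obstacle. Substituting $\Delta^3\equiv\Delta(q)\Delta(q^2)\pmod 2$ and $\Delta\equiv\sum_{m\ge 1,\,m\text{ odd}}q^{m^2}\pmod 2$, it becomes a congruence between theta-series coefficients counting representations by the binary form $x^2+2y^2$ with $x,y$ positive odd integers. I would establish it by appealing to the mod-$2$ Hecke theory of level-$1$ modular forms, which describes $T_p$ acting on~$\F_2[\Delta]$ via explicit polynomials; a more computational alternative is to apply a generalized Sturm bound (cf.~\cite{ckr}) after multiplying the right-hand side by $E_4^6\equiv 1\pmod 2$ to equalize weights, reducing the identity to a finite $q$-expansion verification.
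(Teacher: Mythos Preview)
Your reductions are valid, but the proof is incomplete at the decisive step. The algebra leading to the equivalent condition
\[
T_p\Delta^3\equiv a_p(\Delta^3)\,\Delta\pmod 2\qquad\text{for every prime }p
\]
is correct. The gap is that you never prove this. You still have infinitely many identities, one per prime, and neither of your proposed methods handles them all. The Nicolas--Serre description of $T_p$ on $\F_2[\Delta]$ is recursive in $T_3,T_5$ rather than a closed formula in $p$; it does not hand you $T_p\Delta^3\equiv a_p(\Delta^3)\Delta$ for arbitrary $p$ without further work. The Sturm-bound alternative checks one fixed $p$ by a finite coefficient comparison, but you do not say how to treat all primes at once. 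If you try to make that comparison uniform in~$p$ inside $S_{36}$, the first two coefficients match easily, but the third forces $a_{3p}(\Delta^3)\equiv 0\pmod 2$ for every odd prime $p\neq 3$~--- a nontrivial parity statement about representations by $x^2+2y^2$ with odd entries that you have not established.

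The paper's argument bypasses all of this with a single structural observation that you never invoke: $S_{36}(\SL_2(\Z),\Z)$ is free of rank~$3$, so the Hecke algebra acting on it is generated (over~$\Z$) by $T_2$ and $T_3$. Hence one only needs $T_2 f\equiv a_2(f)f$ and $T_3 f\equiv a_3(f)f\pmod 4$, and each of these reduces to comparing three $q$-expansion coefficients. That is both shorter and complete. Your manipulations (replacing $E_4^6\Delta$ by $\Delta$, matching the weight-$12$ and weight-$36$ Hecke actions, halving to a mod-$2$ question on $\Delta^3$) are correct but buy nothing once one sees that only $T_2$ and $T_3$ need checking; indeed, that same finite-generation fact would also close your gap, but at that point the detour through $\Delta^3\pmod 2$ is superfluous.
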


\begin{proof} The form $f$ lives in the space $S_{36}(1,\Z)$ of cusp forms on $\SL_2(\Z)$ of weight $36$ and coefficients in $\Z$. From classical theory we know that this space is a free $\Z$-module of rank $3$, generated by $(E_4)^6 \Delta$, $(E_4)^3 \Delta^2$, $\Delta^3$. This means that any form $h$ in this space is determined by its first three Fourier coefficients, $a_1(h)$, $a_2(h)$, $a_3(h)$ and also that its reduction modulo $4$ is determined by the reduction of these first three coefficients. I.e., two forms in the space are congruent modulo $4$ if and only if their first three coefficients are congruent modulo~$4$.

A further implication of the above is that the full Hecke algebra acting on this space is generated by $T_1$ (which is the identity) together with $T_2$ and $T_3$.

It follows from these facts, as well as the facts that we have $a_2(f) \equiv 0 \pmod{4}$, $a_3(f) \equiv 2 \pmod{4}$, that the reduction $(f\pmod{4})$ mod $4$ of the normalized form $f$ is a~weak eigenform if and only
\begin{gather*}
T_2 f \equiv a_2 f \equiv 0 \pmod{4}
\end{gather*}
and
\begin{gather*}
T_3 f \equiv a_3 f \equiv 2f \pmod{4} ,
\end{gather*}
and again these congruences can be proved by just verifying them for the first three coefficients.

Now, from the $q$-expansion of $f$ one computes
\begin{gather*}
T_2 f = 1416 q + 34631124912 q^2 + 5356057726176 q^3 + O\big(q^4\big) \equiv 0 + O\big(q^4\big) \pmod{4}
\end{gather*}
and
\begin{gather*}
 T_3 f - 2f = 842652 q + 5356057723344 q^2 + 113674493459566148 q^3 + O\big(q^4\big) \\
\hphantom{T_3 f - 2f}{} \equiv 0 + O\big(q^4\big) \pmod{4} ,
\end{gather*}
and thus the claim follows.
\end{proof}

Part of the reason for considering these different notions of ``eigenform modulo $p^m$'' is that one can attach Galois representations in the usual sense to such eigenforms, at least when the Galois representation attached to the residual (mod $p$) form is absolutely irreducible; see \cite[Theorem~2]{ckw}. Another reason is that, even if one is only interested in classical, characteristic~$0$ eigenforms, when level-lowering modulo $p^m$ is considered, the dc-weak eigenforms seem to enter as the natural framework (cf.\ \cite[Proposition~19]{ckw}.) Thirdly, it is possible under certain technical hypotheses to prove that a continuous representation of $G_{\Q}$ into $\GL_2\big(\Zmod{p^m}\big)$ is ``dc-weak modular'' in the sense that it is attached to a dc-weak eigenform modulo~$p^m$; see \cite[Theorem~3]{panos_gabor_survey}.

We shall now address the question of why one should be interested in the topic of modular forms modulo prime powers in the first place. We offer three reasons below.

First, even if we are only interested in classical, characteristic $0$ eigenforms, we find it a very natural question to understand congruences between them. Some fundamental information, by now classical, is available, notably papers by Katz~\cite{katz_higher_congruences} and Swinnerton--Dyer~\cite{swinnerton-dyer}. However, these do not answer all natural questions, such as for instance the following.

\begin{que}\label{q:finiteness} Fix a level $N$ and $m\in\N$. Consider the set of all eigenforms on $\Gamma_1(N)$ and all weights. Is the set of reductions modulo $p^m$ of these a finite set?
\end{que}

Notice that the answer to this question is affirmative when $m=1$, cf.\ the classical paper~\cite{jochnowitz_finiteness} by Jochnowitz. An affirmative answer for all $m$ was conjectured in \cite[Conjecture 1]{krw} that also gave some (weak) evidence in favor of it. Further evidence (for $N=1$) has been obtained in~\cite{nadim_level1}.

The results of the paper \cite{krw} suggested that a key to approach Question~\ref{q:finiteness} is to understand which weak eigenforms are the reductions of classical, characteristic $0$ eigenforms. A simple construction by Calegari and Emerton, see Section~3 of~\cite{calegari_emerton_large_index}, shows that it may happen that one has infinitely many weak eigenforms modulo~$p^2$ at some fixed level and weight; obviously, these can not all lift to characteristic~$0$ eigenforms {\it at the same fixed level and weight}. It seems to be not immediately clear whether we can lift if we are allowed to change the level and weight of the lifting form.

The origin of the present paper was to investigate the situation through a concrete, non-trivial example. Hence we picked the form $f \pmod{4}$ as a candidate for non-liftability since we knew that it does not lift at level $1$ and any weight. But then, after some lengthy analysis, we found it to lift anyway, but at level $128$ and weight $2$, hence Theorem \ref{thm:main}.

Our main purpose is thus to draw attention to Question \ref{q:finiteness} and in particular the following questions.

\begin{que}\label{q:strong_lifts} Does there exist a weak eigenform modulo $p^m$ $($some level $N$, some weight$)$ that is not the reduction modulo $p^m$ of a characteristic $0$ eigenform of some level $M$ and some weight?
\end{que}

Because of the more complicated behavior of ($p$-adic Galois representations attached to) eigenforms locally at the primes dividing $Np$ where $N$ is the level, one might want to weaken the notion of liftability to characteristic $0$ a bit and ask the following question. Thus, an affirmative answer to the following question would be a stronger statement than an affirmative answer to Question \ref{q:strong_lifts}.

\begin{que}\label{q:strong_lifts_outside_S} Does there exist a weak eigenform $h_0$ modulo $p^m$ $($some level $N$, some weight$)$ such that for any characteristic $0$ eigenform $h$ $($of some level $M$ and some weight$)$, we do not have
\begin{gather*}
a_\ell(h_0) \equiv a_\ell(h) \pmod{p^m}
\end{gather*}
for almost all primes $\ell$?
\end{que}

\begin{que}\label{q:strong_lifts_outside_S_level_Npt} Same question as in Question~{\rm \ref{q:strong_lifts_outside_S}}, but restrict the ``lifting level'' $M$ to numbers of the form~$Np^t$ for some~$t$.
\end{que}

We honestly do not have conjectures about the answers to the last three questions above either way, but find them interesting. The Calegari--Emerton construction mentioned above certainly pulls in the direction of conjecturing affirmative answers. On the other hand, there are results such as \cite{camporino_pacetti} that imply strong modularity of certain weak eigenforms under some technical restrictions.
\medskip

There are various other valid motivations for studying modular forms modular primes powers apart from the above. For instance, P.~Tsaknias and G.~Wiese argue in the introduction to~\cite{panos_gabor_survey} that a better understanding of this area might contribute to our algorithmic handling of the $p$-adic Galois representations attached to classical, characteristic~$0$ eigenforms.

Also, quite independently of the connection to Galois representations, the study of congruences between modular forms is natural if we consider the fact that coefficients of modular forms can be carriers of number theoretic or combinatorial information (representation numbers for positive definite, integral quadratic forms, partition problems, e.g.,~\cite{kiming_barcore_asymp,kiming_barcore,kiming_olsson}). Studying congruences between such numbers is a natural part of number theory.

\section{Proof of Theorem~\ref{thm:main}}\label{section:main}

\subsection[The coefficients of the form $f$ mod~4]{The coefficients of the form $\boldsymbol{f}$ mod~4}\label{section:coeffs_of_f} As in the introduction and everywhere in this paper, we put $f:=E_4^6\Delta + 2\Delta^3$.

For primes $\ell$ we have the following formula for $a_{\ell}(f)$ modulo $4$.

\begin{proposition}\label{prop:f_mod_4}
\begin{gather*}
a_\ell(f) \equiv \begin{cases} 2 \pmod{4} & \mbox{if } \ell \equiv 1,3,5 \pmod{8}, \\ 0 \pmod{4} & \mbox{if } \ell \equiv 2, 7 \pmod{8}. \end{cases}
\end{gather*}
\end{proposition}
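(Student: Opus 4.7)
The plan is to reduce $f$ modulo $4$ to a simple $\Delta$-expression and then analyze each coefficient separately.

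Since $240 \equiv 0 \pmod{16}$, the Eisenstein series satisfies $E_4 \equiv 1 \pmod{16}$, so $E_4^6 \equiv 1 \pmod{16}$, and hence
\begin{gather*}
f \equiv \Delta + 2\Delta^3 \pmod 4, \qquad a_\ell(f) \equiv \tau(\ell) + 2 a_\ell(\Delta^3) \pmod 4.
\end{gather*}
For the first term I would invoke the classical Hatada congruence $\tau(\ell) \equiv 1 + \ell \pmod 4$ for odd primes $\ell$, already cited in the introduction; this gives $\tau(\ell) \equiv 2 \pmod 4$ when $\ell \equiv 1 \pmod 4$, $\tau(\ell) \equiv 0 \pmod 4$ when $\ell \equiv 3 \pmod 4$, together with $\tau(2) = -24 \equiv 0 \pmod 4$.

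For the second term I would use the well-known identity $\Delta \equiv \sum_{k \geq 0} q^{(2k+1)^2} \pmod 2$, which follows from $\Delta = q\prod(1-q^n)^{24}$ together with $(1-q^n)^8 \equiv 1 - q^{8n} \pmod 2$ and Jacobi's triple product. This yields $a_\ell(\Delta^3) \equiv r(\ell) \pmod 2$, where $r(\ell)$ denotes the number of ordered triples of positive odd integers whose squares sum to $\ell$. Since a sum of three odd squares is $\equiv 3 \pmod 8$, we have $r(\ell) = 0$ whenever $\ell \not\equiv 3 \pmod 8$; this immediately settles the cases $\ell = 2$ and $\ell \equiv 1, 5, 7 \pmod 8$.

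For prime $\ell \equiv 3 \pmod 8$, I would argue as follows. Since $\ell \equiv 3 \pmod 4$, $\ell$ is not a sum of two squares, so any representation $\ell = a^2 + b^2 + c^2$ has $a, b, c$ all nonzero; a parity check then forces them all to be odd. Hence $r(\ell) = r_3(\ell)/8$, where $r_3(\ell)$ is the total signed representation count. Gauss's three-squares formula gives $r_3(\ell) = 24\, h(-\ell)$ (the case $\ell = 3$ checked directly, with $r(3) = 1$), and genus theory shows $h(-\ell)$ is odd for every prime $\ell \equiv 3 \pmod 4$ (the $2$-rank of the class group of $\Q(\sqrt{-\ell})$ is zero). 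Therefore $r(\ell) = 3 h(-\ell)$ is odd, giving $2 a_\ell(\Delta^3) \equiv 2 \pmod 4$ and hence $a_\ell(f) \equiv 2 \pmod 4$.

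The main obstacle is the mod-$2$ parity of $r(\ell)$ in the case $\ell \equiv 3 \pmod 8$, which is the only step requiring nontrivial arithmetic input, namely Gauss's three-squares formula and the genus-theoretic oddness of $h(-\ell)$. The remaining cases follow from the elementary reduction of $f$ modulo $4$ and Hatada's congruence.
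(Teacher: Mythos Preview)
Your proof is correct and follows the same overall scheme as the paper: reduce $f$ to $\Delta + 2\Delta^3$ modulo $4$, handle $\tau(\ell)$ via the congruence $\tau(\ell)\equiv 1+\ell\pmod 4$ (the paper cites Kolberg's $1+\ell^{11}$, which is the same thing mod $4$), identify $a_\ell(\Delta^3)\bmod 2$ with the count of representations of $\ell$ as a sum of three odd squares, and then argue that this count vanishes unless $\ell\equiv 3\pmod 8$ and is odd in that case.

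The only substantive difference is in how the oddness of $q_3(\ell)$ for primes $\ell\equiv 3\pmod 8$ is established. The paper invokes the Eisenstein--Dirichlet formula for $r_3(n)$ and obtains $q_3(\ell)=2R-\frac{\ell-1}{2}$ directly in terms of the number $R$ of quadratic residues in $[1,\frac{\ell-1}{2}]$, whence oddness is immediate. You instead use Gauss's relation $r_3(\ell)=24\,h(-\ell)$ (for $\ell>3$) together with the genus-theoretic fact that $h(-\ell)$ is odd for primes $\ell\equiv 3\pmod 4$. Both routes are classical and essentially equivalent via Dirichlet's class number formula; the paper's is slightly more self-contained, while yours is a bit more conceptual. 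Your derivation of $\Delta\equiv\sum q^{(2k+1)^2}\pmod 2$ via $(1-q^n)^8\equiv 1-q^{8n}\pmod 2$ and Jacobi's identity for $\prod(1-q^n)^3$ is also a minor variant of the paper's use of the $\eta^3$ theta identity.
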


The proof of Proposition \ref{prop:f_mod_4} will occupy the rest of this subsection.

As information about $a_{\ell}(\Delta) \pmod{4}$ is available by Kolberg's congruences (cf.~\cite{kolberg}, also~\cite{swinnerton-dyer}), the key to Proposition~\ref{prop:f_mod_4} is information about the coefficients of $\Delta^3 \pmod{2}$.

\begin{lemma}\label{lem:deltamod2} We have
\begin{gather*}
\Delta \equiv \sum_{m=0}^{\infty} q^{(2m+1)^2} \pmod{2} .
\end{gather*}

As a consequence,
\begin{gather*}
a_{n}\big(\Delta^3\big) \equiv q_3(n) \pmod{2}
\end{gather*}
for $n\in\N$ where $q_3(n)$ denotes the number of representations of $n$ as a sum of $3$ odd squares of integers.
\end{lemma}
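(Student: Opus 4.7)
The plan is to start from the Jacobi triple–product identity in the form
\begin{gather*}
\prod_{n=1}^{\infty}(1-q^n)^3 = \sum_{m=0}^{\infty}(-1)^m(2m+1)q^{m(m+1)/2},
\end{gather*}
and exploit the Frobenius mod $2$ to bootstrap this to the $24$th power. Writing $\Delta = q\prod_{n\geq 1}(1-q^n)^{24} = q\bigl(\prod_{n\geq 1}(1-q^n)^3\bigr)^8$ and reducing the Jacobi identity modulo $2$, every coefficient $(-1)^m(2m+1)$ becomes $1$, so
\begin{gather*}
\prod_{n=1}^{\infty}(1-q^n)^3 \equiv \sum_{m=0}^{\infty} q^{m(m+1)/2} \pmod{2}.
\end{gather*}

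Now I would use that raising to the power $2$ over $\F_2[[q]]$ is the Frobenius, so taking the $8$th power acts on exponents by multiplication by $8$. Applied to the previous display this gives
\begin{gather*}
\biggl(\prod_{n=1}^{\infty}(1-q^n)^3\biggr)^8 \equiv \sum_{m=0}^{\infty} q^{4m(m+1)} \pmod{2}.
\end{gather*}
Multiplying by $q$ and using the identity $(2m+1)^2 = 4m(m+1)+1$ yields the first claim. (Alternatively one could just cite the well-known mod-$2$ reduction of $\eta(q)^{24}$, but the derivation via Jacobi's formula is quick and self-contained.)

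For the consequence, I would cube the just-established congruence. Since the squaring/cubing of a series with $0/1$ coefficients commutes with extracting coefficients mod $2$, we obtain
\begin{gather*}
\Delta^3 \equiv \sum_{a,b,c \geq 0}\, q^{(2a+1)^2 + (2b+1)^2 + (2c+1)^2} \pmod{2},
\end{gather*}
and the coefficient of $q^n$ on the right is exactly the number of ordered representations of $n$ as a sum of three odd squares, i.e., $q_3(n)$, under the convention used in the lemma.

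There is no real obstacle here: the only point requiring a moment of care is the use of the Frobenius to go from the third power to the $24$th power mod $2$, and one should be explicit that the $a_m$ in the inner series are $\pm(2m+1)$, which all reduce to $1$ mod $2$, so that $a_m^{2^k} \equiv a_m \pmod{2}$ and the exponent shift by $8$ is legitimate.
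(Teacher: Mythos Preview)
Your proof is correct and follows essentially the same route as the paper's: both start from Jacobi's identity for the cube of the Euler product (the paper phrases it as an identity for $\eta^3$, which differs from your $\prod(1-q^n)^3$ only by the prefactor $q^{1/8}$), reduce modulo~$2$, and then raise to the $8$th power via Frobenius; the consequence for $\Delta^3$ is immediate in both. The only cosmetic difference is that you carry the extra factor of $q$ separately and invoke $(2m+1)^2=4m(m+1)+1$ at the end, whereas the paper's $\eta$-normalization absorbs this shift from the start.
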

\begin{proof} The second statement is of course an immediate consequence of the first. For the first statement, consider Dedekind's $\eta$-function. We have $\Delta = \eta^{24}$. There is a classical identity, attributed to Jacobi, between $\eta^3$ and a certain theta-series
\begin{gather*}
\eta^3(q) = \sum_{\substack{m\in\Z \\ m\equiv 1\pmod{4}}} m \cdot q^{\frac{m^2}{8}} ,
\end{gather*}
cf.\ for instance \cite[p.~271]{petersson} with the definition of the theta-series on p.~26. Of course this implies
\begin{gather*}
\eta^3(q) \equiv \sum_{n \geq 0} q^{\frac{(2n+1)^2}{8}} \pmod{2}
\end{gather*}
and the first statement follows by raising to the $8$th power.
\end{proof}

We then need information about the parity of $q_3(n)$. This can be obtained from the following classical theorem, cf.\ \cite[Section~9, Theorem 3]{grosswald} and \cite[Section~3]{onorobinswahl}.

\begin{theorem}[Eisenstein, Dirichlet] For $n\in\N$, let $r_3(n)$ be the number of solutions in $\Z^3$ of $n = x^2 + y^2 + z^2$. Let $\big(\frac{r}{n}\big)$ denote the Jacobi symbol and $[x]$ the integer part of $x$. Then
\begin{gather*}
r_3(n) = \begin{cases}\displaystyle 24 \sum_{d^2 | n}\sum_{r=1}^{[\frac{n}{4}]} \left(\frac{r}{n/d^2} \right) & \mbox{if } n \equiv 1 \pmod{4}, \\
\displaystyle 8 \sum_{d^2 | n}\sum_{r=1}^{[\frac{n}{2}]} \left(\frac{r}{n/d^2} \right) & \mbox{if } n \equiv 3 \pmod{4}.
\end{cases}
\end{gather*}
\end{theorem}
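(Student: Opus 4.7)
The plan is to derive the Eisenstein--Dirichlet formula along the classical route: reduce to primitive representations, invoke Gauss's theorem expressing the number of primitive representations as a class number of binary quadratic forms, and then apply Dirichlet's analytic class number formula to rewrite that class number as a finite character sum.

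First, I would split $r_3(n)$ according to the greatest common divisor of the representing triple. Writing $(x,y,z) = d\cdot(x',y',z')$ with $\gcd(x',y',z')=1$ and $d^2\mid n$, one obtains
\begin{gather*}
r_3(n) \;=\; \sum_{d^2 \mid n} r_3^{\ast}(n/d^2),
\end{gather*}
where $r_3^{\ast}(m)$ counts primitive representations of $m$. This accounts for the outer sum in the statement, so it suffices to establish for each $m=n/d^2$ the inner identity $r_3^{\ast}(m) = 24\sum_{r=1}^{[m/4]} \left(\frac{r}{m}\right)$ when $m\equiv 1\pmod 4$, and $r_3^{\ast}(m) = 8\sum_{r=1}^{[m/2]}\left(\frac{r}{m}\right)$ when $m\equiv 3\pmod 4$.

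Second, I would invoke Gauss's classical theorem: for squarefree $m>0$, one has $r_3^{\ast}(m) = 12\,h(-4m)$ if $m\equiv 1,2 \pmod 4$, $r_3^{\ast}(m) = 24\,h(-m)$ if $m\equiv 3\pmod 8$, and $r_3^{\ast}(m)=0$ if $m\equiv 7 \pmod 8$; here $h(D)$ denotes the class number of primitive positive definite binary quadratic forms of discriminant $D$. This identity is proved through the Gauss--Lagrange correspondence between proper equivalence classes of quadratic forms and (classes of) representations of $m$, together with careful bookkeeping of the automorphism groups of the ambient forms, which produces the characteristic factors $12$ and $24$.

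Third, I would apply Dirichlet's analytic class number formula. For a fundamental discriminant $-D<-4$, one has $h(-D) = \frac{\sqrt{D}}{\pi}\,L(1,\chi_{-D})$, and $L(1,\chi_{-D})$ can in turn be evaluated as a finite Kronecker/Jacobi-symbol sum running over $[1,D/2]$ or $[1,D/4]$, depending on the parity of $D$ and on the value of $\chi_{-D}(2)$. Matching the two cases $-4m$ (for $m\equiv 1\pmod 4$) and $-m$ (for $m\equiv 3\pmod 4$) against the truncation points $[m/4]$ and $[m/2]$, and combining with the Gauss identity above, yields precisely the two formulas in the statement; summing back over $d^2\mid n$ then recovers the theorem in full.

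The main obstacle will be the constant-tracking in step three: the Dirichlet formula carries a factor $1/(2-\chi_{-D}(2))$ whose value in the two parity classes controls whether the final numerical coefficient becomes $24$ or $8$, and one must also reconcile the upper limits $[D/2]$ coming from the $L$-value with the limits $[m/4]$, $[m/2]$ appearing in the statement via a short symmetrization argument using $(n-r/n) = -(r/n)$ for $n\equiv 3\pmod 4$. Once the constants are nailed down, the structural argument (reduction to primitive case plus Gauss plus Dirichlet) gives the result immediately.
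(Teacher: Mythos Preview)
The paper does not actually prove this theorem: it is quoted as a classical result with references to \cite[Section~9, Theorem~3]{grosswald} and \cite[Section~3]{onorobinswahl}, and is used only as input to Corollary~\ref{cor:q_3}. Your outline (reduction to primitive representations, Gauss's identification of $r_3^\ast$ with a form class number, Dirichlet's finite-sum evaluation of that class number) is exactly the classical derivation one finds in those sources, so in spirit your approach agrees with what the paper is invoking, even though the paper itself supplies no argument.

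One point to tighten if you want the theorem in the generality stated. In your second step you assert Gauss's formula only for \emph{squarefree} $m$, but in the decomposition $r_3(n)=\sum_{d^2\mid n} r_3^\ast(n/d^2)$ the inner argument $m=n/d^2$ need not be squarefree (e.g., $n=81$, $d=1$ gives $m=81$; and already $9=1^2+2^2+2^2$ shows $r_3^\ast$ does not vanish on nonsquarefree odd integers). To cover these terms you need the extension of Gauss's theorem to arbitrary odd $m$, which introduces an extra multiplicative factor at primes whose square divides $m$ (equivalently, one works with Hurwitz class numbers rather than ordinary class numbers); the matching with the Jacobi-symbol sum then also requires that the Jacobi symbol $\big(\tfrac{r}{m}\big)$ is being used, not a Legendre symbol. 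For the paper's actual application this is moot, since Corollary~\ref{cor:q_3} only invokes the formula at a prime $\ell\equiv 3\pmod 8$, where your squarefree version already suffices.
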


This has the following special case.
\begin{corollary}\label{cor:q_3} Let $\ell \equiv 3 \pmod{8}$ be a prime. Then
\begin{gather*}
q_3(\ell) = R - N,
\end{gather*}
where $R$ (respectively $N$) is the number of quadratic residues $($respectively, quadratic nonresidues$)$ in the interval $\big[1, \ldots, \frac{\ell - 1}{2}\big]$.

Consequently,
\begin{gather*}
q_3(\ell) = 2R - \frac{\ell-1}{2}
\end{gather*}
is odd.
\end{corollary}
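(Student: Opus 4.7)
The plan is to extract $q_3(\ell)$ from the Eisenstein--Dirichlet formula for $r_3(\ell)$, using that the congruence $\ell\equiv 3\pmod 8$ forces every representation of $\ell$ as a sum of three squares to have all three entries odd.

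First I would compare $q_3(\ell)$ with $r_3(\ell)$. Lemma~\ref{lem:deltamod2} shows that $q_3(n)$ is the number of ordered triples $(a_1,a_2,a_3)$ of positive odd integers with $a_1^2+a_2^2+a_3^2=n$, whereas $r_3(n)$ counts ordered triples in $\Z^3$. Since squares are $\equiv 0$ or $1 \pmod 8$ (according to parity), a representation $\ell = x^2+y^2+z^2$ with $\ell \equiv 3\pmod 8$ must have all of $x,y,z$ odd (and in particular nonzero). Hence flipping signs of each coordinate gives a free action of $\{\pm 1\}^3$ on the set counted by $r_3(\ell)$, whose orbits are the ordered positive odd triples, so
\begin{gather*}
r_3(\ell) = 8\, q_3(\ell) .
\end{gather*}

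Next I would apply the Eisenstein--Dirichlet theorem. Since $\ell$ is prime, the only $d$ with $d^2\mid \ell$ is $d=1$, and since $\ell\equiv 3\pmod 4$ we use the second case of the formula:
\begin{gather*}
r_3(\ell) = 8 \sum_{r=1}^{[\ell/2]} \left(\frac{r}{\ell}\right) = 8\,(R-N),
\end{gather*}
because $[\ell/2]=(\ell-1)/2$ and $\left(\frac{r}{\ell}\right)=+1$ on quadratic residues and $-1$ on non-residues. Combining with the previous step gives $q_3(\ell)=R-N$, and since $R+N=(\ell-1)/2$ we get $q_3(\ell) = 2R - \frac{\ell-1}{2}$.

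For the parity statement, write $\ell = 8k+3$, so $\frac{\ell-1}{2} = 4k+1$ is odd, and thus $q_3(\ell) = 2R - (4k+1)$ is odd. The only real content here is the mod-$8$ argument showing $r_3(\ell)=8\,q_3(\ell)$; everything else is a direct specialization of the stated theorem plus the elementary identity $R+N=(\ell-1)/2$, so I do not anticipate a serious obstacle.
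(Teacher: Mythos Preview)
Your proposal is correct and follows essentially the same route as the paper's proof: both deduce $r_3(\ell)=8\,q_3(\ell)$ from the mod $8$ constraint, specialize the Eisenstein--Dirichlet formula using that $\ell$ is prime (so $d=1$), and finish with $R+N=(\ell-1)/2$. One small slip: even squares are $\equiv 0$ or $4 \pmod 8$, not only $0$; the conclusion that all three summands must be odd still holds after checking the extra residue $4$, so the argument is unaffected.
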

\begin{proof} If $\ell = x^2 + y^2 + z^2 \equiv 3 \pmod{8}$, then we must have $x \equiv y \equiv z \equiv 1 \pmod{2}$. Thus $q_3(\ell) = \frac{1}{8} r_3(\ell)$ (here the $\frac{1}{8}$ corrects for allowing signs in the count corresponding to~$r_3$).

The last statement follows by noting that
\begin{gather*}
N = \frac{\ell-1}{2} - R
\end{gather*}
for any odd prime $\ell$.
\end{proof}

\begin{proof}[Proof of Proposition \ref{prop:f_mod_4}] Notice first that $E_4^3 \equiv 1 \pmod{4}$. Let $\ell$ denote a prime number. Since
\begin{gather*}
a_\ell(\Delta) \equiv 1 + \ell^{11} \pmod{4}
\end{gather*}
when $\ell$ is odd, cf.\ \cite{kolberg} (also \cite{swinnerton-dyer}), and since $\tau(2) = -24 \equiv 0 \pmod{4}$, we have
\begin{gather*}
a_\ell\big(E_4^6 \Delta\big) \equiv \begin{cases} 2 \pmod{4} & \mbox{if } \ell \equiv 1 \pmod{4}, \\ 0 \pmod{4}& \mbox{if } \ell \equiv 2,3 \pmod{4} .
\end{cases}
\end{gather*}

Since $a_\ell(f) \equiv a_\ell\big(E_4^6 \Delta\big) + 2q_3(\ell) \pmod{4}$ by Lemma~\ref{lem:deltamod2}, since clearly $q_3(\ell) = 0$ unless $\ell\equiv 3 \pmod{8}$, and since $q_3(\ell)$ is odd when $\ell \equiv 3 \pmod{8}$ by Corollary~\ref{cor:q_3}, the desired formula for~$a_\ell(f) \pmod{4}$ follows.
\end{proof}

\subsection{A mod 4 Galois representation}\label{section:gal_repr} We now construct a mod $4$ Galois representation that is unramified outside $2$ and $\infty$, and such that the trace of $\Frob_\ell$, a Frobenius element at the unramified prime~$\ell$, coincides with~$a_\ell(f) \pmod{4}$ for all odd primes~$\ell$. The representation will factor through a certain dihedral extension of $\Q$.

Here, and in what follows, let $K$ denote the following field:
\begin{gather*}
K := \Q\big( i,\sqrt{2},\sqrt{1+i} \big) .
\end{gather*}

\begin{proposition}\label{prop:D_4_field_K} For the field $K = \Q\big( i,\sqrt{2},\sqrt{1+i} \big)$ we have that $K$ is a Galois extension of $\Q$ with Galois group isomorphic to the dihedral group $D_4$ of order $8$:
\begin{gather*}
\Gal(K/\Q) \cong D_4 = \big\langle r,s \,|\, r^4 = s^2 = 1,\, srs = r^{-1} \big\rangle
\end{gather*}
with the generators $r$, $s$ acting as follows
\begin{gather*}
si = i,\qquad s\sqrt{2} = -\sqrt{2}, \qquad s\sqrt{1+i} = \sqrt{1+i} ,\\
ri = -i, \qquad r\sqrt{2} = -\sqrt{2}, \qquad r\sqrt{1+i} = \frac{1-i}{\sqrt{2}}\cdot \sqrt{1+i}.
\end{gather*}
\end{proposition}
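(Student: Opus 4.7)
The plan is to identify $K$ as the splitting field of an explicit irreducible quartic, show that $[K:\Q]=8$, and then verify by direct computation that the prescribed formulas for $r$ and $s$ define Galois automorphisms satisfying the dihedral relations.

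First, I would consider $P(x) = x^4 - 2x^2 + 2$, which is irreducible over $\Q$ by Eisenstein at $2$ and has roots $\pm\sqrt{1+i},\pm\sqrt{1-i}$. Using $(\sqrt{1+i})^2 - 1 = i$ and, for a consistent branch choice, $\sqrt{1+i}\cdot\sqrt{1-i} = \sqrt{2}$ (so that $\sqrt{1-i} = \sqrt{2}/\sqrt{1+i}$), one sees that the splitting field of $P$ over $\Q$ equals $K$. Hence $K/\Q$ is Galois.

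To pin down the degree I would use that $[\Q(\sqrt{1+i}):\Q] = 4$ by irreducibility of $P$ and show $K \supsetneq \Q(\sqrt{1+i})$ by proving $\sqrt{1-i}\notin \Q(\sqrt{1+i})$. Writing $\sqrt{1-i} = a + b\sqrt{1+i}$ with $a,b\in\Q(i)$, squaring and comparing the coefficient of $\sqrt{1+i}$ (which is not in $\Q(i)$) forces $ab=0$; the sub-cases then demand either $a^2 = 1-i$ or $b^2 = -i$, both impossible in $\Q(i)$. Since $K = \Q(\sqrt{1+i})(\sqrt{2})$ this gives $[K:\Q]=8$, so $|\Gal(K/\Q)| = 8$.

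Next I would check well-definedness of $s$ and $r$: each sends the generators $i,\sqrt{2},\sqrt{1+i}$ to roots of the corresponding minimal polynomials over the field generated by the earlier images. The only non-obvious compatibility is for $r$ on $\sqrt{1+i}$, where
\begin{gather*}
\left(\frac{1-i}{\sqrt{2}}\sqrt{1+i}\right)^2 = \frac{(1-i)^2}{2}(1+i) = -i(1+i) = 1-i = r(1+i),
\end{gather*}
so $r$ indeed extends to an element of $\Gal(K/\Q)$, and similarly for $s$.

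Finally, the $D_4$ relations are verified on the three generators: $s^2 = 1$ is immediate; one computes that $r^2$ fixes $\Q(i,\sqrt{2})$ and sends $\sqrt{1+i}\mapsto -\sqrt{1+i}$, so $r^4 = 1$ and $r$ has order exactly $4$; and $srs$ agrees with $r^{3}$ on each of $i$, $\sqrt{2}$, $\sqrt{1+i}$, giving $srs = r^{-1}$. Since $s$ fixes $\sqrt{1+i}$ while no non-trivial power of $r$ does, $s\notin\langle r\rangle$, so $|\langle r,s\rangle| = 8 = |\Gal(K/\Q)|$ and $\Gal(K/\Q) = \langle r,s\rangle \cong D_4$. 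The main obstacle is the degree step ensuring $[K:\Q]=8$ rather than $4$; the rest is bookkeeping on the three generators.
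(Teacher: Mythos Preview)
Your argument is correct. The paper itself does not give a proof of this proposition, stating only that ``the elementary proof is left to the reader,'' with a pointer to general references on $D_4$-extensions. Your approach via the splitting field of $x^4-2x^2+2$ is a clean way to carry out exactly what the paper omits: it makes the fact that $K/\Q$ is Galois automatic, reduces the degree computation to showing $\sqrt{2}\notin\Q(\sqrt{1+i})$ (equivalently $\sqrt{1-i}\notin\Q(\sqrt{1+i})$, which you handle by the squaring argument over $\Q(i)$), and then the existence and identification of $r,s$ follow from the standard extension lemma for automorphisms of Galois extensions together with the explicit verifications you sketch. There is nothing in the paper to compare against; you have simply supplied the omitted elementary proof.
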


The elementary proof is left to the reader. (For more general information about the structure of Galois extensions with Galois group isomorphic to $D_4$, confer~\cite{ik3} or~\cite[Chapter~2]{jly}.)

\begin{proposition}\label{prop:D_4_mod_4} We have an embedding
\begin{gather*}
D_4 \hookrightarrow \GL_2(\Z/4\Z)
\end{gather*}
given by
\begin{gather*}
r \mapsto = \abcd{1}{2}{1}{1} ,\qquad s \mapsto \abcd{-1}{2}{2}{-1} .
\end{gather*}
\end{proposition}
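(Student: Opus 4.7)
The plan is entirely computational: to exhibit a homomorphism $\varphi\colon D_4 \to \GL_2(\Z/4\Z)$ via the assigned values on generators, one only needs to verify that the defining relations of $D_4$ hold for the images, and then check that the resulting homomorphism is injective.

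First I would set $R := \abcd{1}{2}{1}{1}$ and $S := \abcd{-1}{2}{2}{-1}$ in $M_2(\Z/4\Z)$, and directly compute $R^2$, $R^4$, $S^2$, and $SRS$ modulo $4$. A short calculation gives $R^2 \equiv \abcd{3}{0}{2}{3}\pmod{4}$, hence $R^4 \equiv I$; similarly $S^2 \equiv I$; and $SRS \equiv \abcd{-1}{2}{1}{-1} \equiv R^3 = R^{-1} \pmod{4}$. These checks are routine $2\times 2$ matrix multiplications mod~$4$. Since $D_4 = \langle r,s \mid r^4 = s^2 = 1,\, srs = r^{-1}\rangle$, this shows that $r\mapsto R$, $s\mapsto S$ extends uniquely to a group homomorphism $\varphi\colon D_4 \to \GL_2(\Z/4\Z)$.

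For injectivity, I would note that the kernel of $\varphi$ is a normal subgroup of $D_4$, and $D_4$ has a very short list of normal subgroups, so it suffices to show that the image has order $8$. For this I would exhibit all four powers $I, R, R^2, R^3$ mod~$4$, observe they are pairwise distinct (so $R$ has order exactly $4$), and then check that $S$ is not among them (e.g., comparing the upper-left entry: $S$ has $-1\equiv 3$, while of the powers of $R$ only $R^2$ and $R^3$ have upper-left entry $3$, and neither has $S$'s off-diagonal entries). Hence $\{I,R,\ldots,R^3,S,SR,SR^2,SR^3\}$ are $8$ distinct elements of $\GL_2(\Z/4\Z)$, so $\varphi$ is injective.

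There is no real obstacle here; the only mild care needed is to do the mod-$4$ arithmetic correctly (in particular when computing $SRS$ and $R^{-1}=R^3$ to compare them) and to note that the matrices actually lie in $\GL_2(\Z/4\Z)$, which is immediate since $\det R \equiv -1 \pmod 4$ and $\det S \equiv -3 \equiv 1 \pmod 4$, both units mod~$4$.
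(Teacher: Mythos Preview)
Your proof is correct and follows essentially the same approach as the paper's: the paper simply records that the two matrices have orders $4$ and $2$ respectively and satisfy $YXY=X^{-1}$, which is exactly your verification of the defining relations together with the order computation. Your treatment of injectivity is a little more explicit than the paper's (you list elements and compare entries), but note that once $R$ has exact order $4$ the kernel cannot contain $r^2$, and every nontrivial normal subgroup of $D_4$ does, so injectivity already follows without separately checking $S\notin\langle R\rangle$.
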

\begin{proof} One checks that the matrices
\begin{gather*}
X := \abcd{1}{2}{1}{1} ,\qquad Y:= \abcd{-1}{2}{2}{-1}
\end{gather*}
in $\GL_2(\Z/4\Z)$ have orders $4$ and $2$, respectively, and that $YXY = X^{-1}$.
\end{proof}

\begin{definition}\label{def:repr_rho} Denote now and in what follows by $\rho$ the following representation of $G_{\Q}$ into $\GL_2(\Z/4\Z)$:
\begin{gather*}
\rho \colon \ G_{\Q} \twoheadrightarrow \Gal(K/\Q) \cong D_4 \hookrightarrow \GL_2(\Z/4\Z),
\end{gather*}
where the surjection $G_{\Q} \twoheadrightarrow \Gal(K/\Q)$ is the natural surjection, the isomorphism $\Gal(K/\Q) \cong D_4$ is as given above, and the embedding $D_4 \hookrightarrow \GL_2(\Z/4\Z)$ is the one given by Proposition \ref{prop:D_4_mod_4}.
\end{definition}

\begin{proposition}\label{prop:D_4_gal_repr} The representation $\rho$ is unramified outside $2\cdot\infty$, and we have
\begin{gather*}
\Tr \rho(\Frob_\ell) = (a_\ell(f) \pmod{4})
\end{gather*}
for odd primes $\ell$.
\end{proposition}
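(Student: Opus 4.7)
The plan is to reduce everything to an analysis of the $D_4 \cong \Gal(K/\Q)$ action on $K$, organized by the residue of $\ell$ modulo $8$. For the unramification statement I would note that $K \subseteq \Q(\zeta_8, \sqrt{1+i})$, that $\Q(\zeta_8)/\Q$ is ramified only at $2$, and that $1+i$ is (up to a unit) a uniformizer at the unique prime of $\Z[i]$ above $2$; hence adjoining $\sqrt{1+i}$ only adds ramification above $2$. Consequently $K/\Q$, and therefore $\rho$, is unramified outside $2\cdot\infty$.

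For the trace formula I would first compute $\Tr \rho(g) \pmod 4$ for all eight $g \in D_4$ directly from the matrices $X = \rho(r)$ and $Y = \rho(s)$ of Proposition~\ref{prop:D_4_mod_4}. A short matrix-multiplication exercise gives that each of $1, r, r^2, r^3, s, r^2 s$ has trace $\equiv 2 \pmod 4$, while both $rs$ and $r^3 s$ have trace $\equiv 0 \pmod 4$. The key bookkeeping consequence is that trace mod $4$ is constant on each of the four cosets of the center $\langle r^2 \rangle$ in $D_4$, so $\Tr \rho(\Frob_\ell) \pmod 4$ depends only on the image of $\Frob_\ell$ in $D_4/\langle r^2 \rangle$.

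Since $r^2$ acts trivially on both $i$ and $\sqrt{2}$, we have $\langle r^2 \rangle = \Gal(K/\Q(\zeta_8))$, so $D_4/\langle r^2 \rangle$ is identified canonically with $\Gal(\Q(\zeta_8)/\Q) \cong (\Z/8\Z)^\times$, and the image of $\Frob_\ell$ there corresponds to $\ell \pmod 8$. Using that $r$ sends $(i, \sqrt{2})$ to $(-i, -\sqrt{2})$ while $s$ sends it to $(i, -\sqrt{2})$, a case-by-case check shows that $\Frob_\ell$ lands in the coset $\{1, r^2\}$, $\{r, r^3\}$, $\{s, r^2 s\}$, or $\{rs, r^3 s\}$ according as $\ell \equiv 1, 3, 5, 7 \pmod 8$, giving traces $2, 2, 2, 0$ modulo $4$ --- in exact agreement with Proposition~\ref{prop:f_mod_4}.

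The main (mild) obstacle is recognising that one need not pin down $\Frob_\ell$ exactly in $D_4$: the central-coset observation sidesteps the otherwise harder question of whether, for example, $\Frob_\ell$ equals $1$ or $r^2$ for $\ell \equiv 1 \pmod 8$, which amounts to asking whether a prime of $\Q(\zeta_8)$ above $\ell$ splits or remains inert in the quadratic extension $K/\Q(\zeta_8)$. Because the trace is the same on the two elements of every central coset, that finer information is invisible at the level of traces modulo $4$.
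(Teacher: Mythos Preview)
Your proof is correct and follows essentially the same route as the paper's: compute all eight traces, note that the trace mod~$4$ is constant on cosets of the center $\langle r^2\rangle$, and identify the coset of $\Frob_\ell$ from its action on $\Q(i,\sqrt{2})=\Q(\zeta_8)$. The only cosmetic difference is that you package the last step via $D_4/\langle r^2\rangle \cong \Gal(\Q(\zeta_8)/\Q)\cong(\Z/8\Z)^\times$, whereas the paper reaches the same conclusion through the subgroup lattice and splitting behavior in the three quadratic subfields $\Q(i)$, $\Q(i\sqrt{2})$, $\Q(\sqrt{2})$.
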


\begin{proof} Since $K/\Q$ is unramified outside $2\cdot \infty$, so is $\rho$.

Consider the subgroup lattice for $\Gal(K/\Q) \cong D_4$:
\begin{gather*}
\xymatrix{ & & \langle 1 \ar@{-}[drr]\ar@{-}[dr]\ar@{-}[d]\ar@{-}[dl]\ar@{-}[dll]\rangle & & \\
 \langle s r^2 \ar@{-}[dr] \rangle & \langle s \rangle \ar@{-}[d] & \langle r^2 \rangle \ar@{-}[dl]\ar@{-}[d]\ar@{-}[dr] & \langle s r^3 \rangle \ar@{-}[d]& \langle s r \rangle\ar@{-}[dl] \\
 & \langle s \rangle \times \langle r^2 \rangle \ar@{-}[dr] & \langle r \rangle \ar@{-}[d] & \langle sr \rangle \times \langle r^2 \rangle \ar@{-}[dl] &\\
 & & \Gal(K/\Q). & & }
\end{gather*}

The three lowest-level subgroups $\langle s \rangle \times \langle r^2 \rangle$, $\langle r \rangle$, and $\langle sr \rangle \times \langle r^2 \rangle$ are those of index $2$ and correspond to the quadratic subfields $\Q(i)$, $\Q\big(i\sqrt{2}\big)$, and $\Q\big(\sqrt{2}\big)$, respectively. Thus we find that:
\begin{enumerate}\itemsep=0pt
 \item $\Frob_\ell \in \big\{1, r^2\big\} \Leftrightarrow \mbox{$\ell$ splits in $\Q(i)$, $\Q\big(i\sqrt{2}\big)$, and $\Q\big(\sqrt{2}\big)$}$,
 \item $\Frob_\ell \in \big\{r, r^3\big\} \Leftrightarrow \mbox{$\ell$ splits in $\Q\big(i\sqrt{2}\big)$, but not in $\Q(i)$ or $\Q\big(\sqrt{2}\big)$}$,
 \item $\Frob_\ell \in \big\{s, s r^2\big\} \Leftrightarrow \mbox{$\ell$ splits in $\Q(i)$, but not in $\Q\big(i\sqrt{2}\big)$ or $\Q\big(\sqrt{2}\big)$}$,
 \item $\Frob_\ell \in \big\{sr, s r^3\big\} \Leftrightarrow \mbox{$\ell$ splits in $\Q\big(\sqrt{2}\big)$, but not in $\Q(i)$ or $\Q\big(i\sqrt{2}\big)$}$.
\end{enumerate}

By the laws governing the splitting of primes in quadratic extensions of $\Q$, the cases $(1)$ through $(4)$ correspond to:
\begin{enumerate}\itemsep=0pt
 \item $\legendre{-1}{\ell} = \legendre{2}{\ell} = 1 \Leftrightarrow \ell \equiv 1 \pmod{8}$,
 \item $\legendre{-1}{\ell} = \legendre{2}{\ell} = -1 \Leftrightarrow \ell \equiv 3 \pmod{8}$,
 \item $\mbox{$\legendre{-1}{\ell} = 1$ and $\legendre{2}{\ell} = -1$} \Leftrightarrow \ell \equiv 5 \pmod{8}$,
 \item $\mbox{$\legendre{-1}{\ell} = -1$ and $\legendre{2}{\ell} = 1$} \Leftrightarrow \ell \equiv 7 \pmod{8}$.
\end{enumerate}

The proof is then finished by Proposition \ref{prop:f_mod_4} when we check that, under the embedding $D_4\hookrightarrow \GL_2(\Z/4\Z)$ given by Proposition~\ref{prop:D_4_mod_4}, the elements $1$, $r$, $r^2$, $r^3$, $s$, $sr^2$ are sent to matrices of trace~$2$ whereas the elements~$sr$, $sr^3$ are sent to matrices of trace~$0$.
\end{proof}

\subsection{An elliptic curve and its 4-torsion}\label{sect:ell} We now prepare the proof of Theorem \ref{thm:main} by studying the Galois action on the $4$-torsion of the elliptic curve that it involves.

\begin{proposition}\label{prop:ell_4_torsion} For the elliptic curve
\begin{gather*}
E \colon \ y^2 = x^3+x^2+x+1
\end{gather*}
we have
\begin{gather*}
\rho_{E,4} \cong \rho,
\end{gather*}
where $\rho_{E,4}$ is the representation of $G_{\Q}$ on $E[4]$ and $\rho$ is the representation of the previous section.
\end{proposition}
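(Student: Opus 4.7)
The plan is to compute $E[4]$ explicitly as a subgroup of $E(\Qbar)$, show it is defined over $K$, and then match the induced $\Gal(K/\Q)$-action with $\rho$ on a suitable basis. Writing $y^2 = (x+1)(x-i)(x+i)$, the $2$-torsion is $E[2] = \{O, T_1, T_2, T_3\}$ with $T_1 = (-1,0)$, $T_2 = (i,0)$, $T_3 = (-i,0)$, defined over $\Q(i) \subset K$. To locate $P_1 \in E[4]$ with $2P_1 = T_1$, I would apply the duplication formula: the resulting $x$-equation $(3x^2+2x+1)^2 = 8x(x^3+x^2+x+1)$ collapses algebraically to $(x^2+2x-1)^2 = 0$, so $x(P_1) = -1 \pm \sqrt{2}$ and $y(P_1)^2 \in \{4(\sqrt{2}-1),\, -4(\sqrt{2}+1)\}$. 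Writing $\sqrt{1+i} = a+bi \in K$ one has $a^2 - b^2 = 1$ and $2ab = 1$, hence $a^2 + b^2 = \sqrt{2}$ and $(a \mp b)^2 = \sqrt{2} \mp 1$; thus both $\sqrt{\sqrt{2}-1}$ and $\sqrt{\sqrt{2}+1}$ lie in $K$, and $P_1 \in E(K)$.

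The analogous computation for $2P_2 = T_2$ produces the perfect-square quadratic $x^2 - 2ix + (1 - 2i) = 0$ with roots $i \pm (1+i)\sqrt{1+i} \in K$. Substituting $x = i + (1+i)\sqrt{1+i}$ and using $(1+i)^2 = 2i$, one computes $(x+1)(x-i)(x+i) = -4(1+\sqrt{1+i})^2$, so $y(P_2) = \pm 2i(1+\sqrt{1+i}) \in K$. By $G_\Q$-symmetry the preimages of $T_3$ also lie in $E(K)$; hence $E[4] \subset E(K)$ and $\rho_{E,4}$ factors through $\Gal(K/\Q) \cong D_4$.

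Taking $\{P_1, P_2\}$ as a $\Z/4\Z$-basis of $E[4]$, I would then compute $r(P_i)$ and $s(P_i)$ for $i=1,2$ by applying $r$ and $s$ to the coordinates above via the recipes of Proposition~\ref{prop:D_4_field_K}, and express each image as a $\Z/4\Z$-linear combination of $P_1$ and $P_2$. A useful mod-$2$ sanity check: since $s$ fixes $i$ it acts trivially on $E[2]$, consistent with $Y \equiv I \pmod{2}$; and $r$ fixes $T_1$ and swaps $T_2 \leftrightarrow T_3$, consistent with $X \equiv \abcd{1}{0}{1}{1} \pmod{2}$. The main obstacle lies precisely in this last step: one must track carefully the action of $r$ and $s$ on the nested square roots $\sqrt{2}$, $\sqrt{1+i}$, $\sqrt{\sqrt{2} \mp 1}$, $\sqrt{1+\sqrt{1+i}}$ appearing in the coordinates of the $P_i$, and verify that the resulting matrices are simultaneously conjugate in $\GL_2(\Z/4\Z)$ to $X$ and $Y$. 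Any consistent choice of signs will yield an isomorphic representation, which is all the proposition claims.
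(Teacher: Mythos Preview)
Your approach is the paper's: compute $E[4]$ explicitly, show it lies in $K$, choose a $\Z/4\Z$-basis, and read off the action of $r$ and $s$. The one substantive difference is the basis. The paper labels the $2$-torsion as $Q_1=(-i,0)$, $Q_2=(-1,0)$, $Q_3=(i,0)$ and takes $(P_1,P_2)$ with $2P_j=Q_j$; with this particular choice the matrices of $r$ and $s$ come out \emph{equal} to $X=\abcd{1}{2}{1}{1}$ and $Y=\abcd{-1}{2}{2}{-1}$, so no conjugacy search is needed. Your labeling has $T_1=(-1,0)$, $T_2=(i,0)$, so your $(P_1,P_2)$ is essentially the paper's $(P_2,P_3)$; you would obtain different (though conjugate) matrices and would still owe that conjugacy check.

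Two smaller points. The nested roots you worry about all dissolve into the generators of $K$: your own formula already gives $y(P_2)=\pm 2i(1+\sqrt{1+i})\in K$, so $\sqrt{1+\sqrt{1+i}}$ never enters; and your identity $(a-b)^2=\sqrt{2}-1$ with $\sqrt{1+i}=a+bi$ is precisely how the paper writes the remaining $y$-coordinate as $\sqrt{1+i}(1+i-i\sqrt{2})$. So the ``main obstacle'' you flag is in fact four lines of bookkeeping (computing $rP_1,rP_2,sP_1,sP_2$ from the explicit coordinates), which the paper simply carries out.
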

\begin{proof} The points of order $2$ on $E$ are found to be
\begin{gather*}
Q_1 := (-i,0), \qquad Q_2 := (-1,0), \qquad Q_3 := (i,0).
\end{gather*}

Using standard formulas for halving points, one finds points $P_j$, $j=1,2,3$, with $2\cdot P_j = Q_j$:
\begin{gather*}
x(P_1) = -i - i\sqrt{2}\sqrt{1+i} ,\qquad y(P_1) = -2i - \sqrt{2}\sqrt{1+i} - i\sqrt{2}\sqrt{1+i} ,\\
x(P_2) = -1+\sqrt{2}, \qquad y(P_2) = \sqrt{1+i} + i\sqrt{1+i} - i\sqrt{2} \sqrt{1+i} ,\\
x(P_3) = i - \sqrt{1+i} - i\sqrt{1+i} , \qquad y(P_3) = 2i - 2i\sqrt{1+i} .
\end{gather*}

We see that $\Q(E[4])$ is the $D_4$-extension $K/\Q$ above. We have that $P_1$, $P_2$ is a basis for $E[4]$, and one verifies that
\begin{gather*}
P_1+P_2=P_3.
\end{gather*}

Identifying $\Gal(K/\Q) \cong D_4 = \langle r,s\rangle$ as in Definition \ref{prop:D_4_field_K}, one computes
\begin{gather*}
r P_1 = P_3 = P_1 + P_2,\\
r P_2 = 2\cdot P_1 + P_2 ,\\
sP_1 = P_1 + 2\cdot P_3 = -P_1 + 2\cdot P_2 ,\\
sP_2 = P_2 + 2\cdot P_3 = 2\cdot P_1 - P_2 ,
\end{gather*}
whence the claim follows by the definition of $\rho$, Definition \ref{def:repr_rho}. (Notice that we have chosen to write the Galois action from the left. This means that the representing matrices should be considered as acting on column vectors of coordinates w.r.t.\ $(P_1,P_2)$.)
\end{proof}

\begin{proof}[Proof of Theorem \ref{thm:main}] That $\Tr \rho_2(\Frob_\ell) \equiv a_\ell(f) \pmod{4}$ for odd primes $\ell$ follows by combining Propositions~\ref{prop:D_4_gal_repr} and~\ref{prop:ell_4_torsion}. Thus, $a_\ell(f) \equiv a_\ell(g) \pmod{4}$ for odd primes $\ell$ where $g$ is the cusp form of weight $2$ and level $128$ attached to $E$. Since $g$ is an eigenform, since $f$ is an eigenform modulo $4$, since $a_2(f) \equiv 0 \pmod{4}$ (cf.\ Proposition~\ref{prop:f_mod_4}), and since $a_2(g)=0$, cf.\ \cite[\href{http://www.lmfdb.org/EllipticCurve/Q/128/a/2}{Elliptic Curve 128.a2}]{lmfdb}, we deduce that
\begin{gather*}
f \equiv g \pmod{4}.\tag*{\qed}
\end{gather*}\renewcommand{\qed}{}
\end{proof}

\section{Further remarks}\label{sect:remarks}

{\bf 1.} There are $4$ newforms of weight $2$ on $\Gamma_0(128)$, all with coefficients in $\Z$, i.e., corresponding to elliptic curves over $\Q$ of conductor $128$. The forms are all congruent to each other modulo $4$ so we could have used anyone of them (and the corresponding elliptic curve) in Theorem~\ref{thm:main}.

{\bf 2.} Our initial purpose of studying the form $f$ above was, as described in Section~\ref{section:background}, to choose a~form that really appeared hard to lift to a true, characteristic $0$ eigenform and then test Question~\ref{q:strong_lifts} against it. Our main result then says that this form, that seemed hard to lift, in fact does not provide us with an affirmative answer to that question.

Of course, once we have that result, we also know that $f$ can be lifted in infinitely many ways in the sense of Question~\ref{q:strong_lifts_outside_S} since we can twist $f$ (and the lifting form) with a quadratic character without changing the Fourier coefficients modulo~$4$ (as these are just~$0$ and~$2$ modulo~$4$, cf.\ Proposition~\ref{prop:f_mod_4}.)

{\bf 3.} One might wonder, if we just wanted to show that our form $f \pmod{4}$ lifts to some eigenform in characteristic $0$, could we not work with the weight $1$ eigenform that is attached to a complex representation of $\Gal(K/\Q)$ with~$K$ the dihedral field of Section~\ref{section:gal_repr}? The answer is no: if~$h$ is such a form of weight~$1$ one checks that we do not have
\begin{gather*}
a_\ell(f) \equiv a_\ell(h) \pmod{4}
\end{gather*}
for almost all primes $\ell$.

{\bf 4.} S.~Deo drew our attention to the fact that one can construct another Galois representation with the properties of Proposition~\ref{prop:D_4_gal_repr} by working with the field $L = \Q\big( i,\sqrt[4]{2}\big)$ instead of the field~$K$ of Section~\ref{section:gal_repr}: $L/\Q$ is again a Galois extension with Galois group isomorphic to the dihedral group~$D_4$ of order~$8$:
\begin{gather*}
\Gal(L/\Q) \cong D_4 = \big\langle r,s \,|\, r^4 = s^2 = 1,\, srs = r^{-1} \big\rangle
\end{gather*}
with the generators $r$, $s$ acting as follows
\begin{gather*}
si = -i,\qquad s\sqrt[4]{2} = \sqrt[4]{2}, \qquad ri = i, \qquad r\sqrt[4]{2} = i \sqrt[4]{2}.
\end{gather*}

If one considers the embedding $D_4 \hookrightarrow \GL_2(\Z/4\Z)$ given by
\begin{gather*}
r \mapsto \abcd{1}{2}{1}{1} ,\qquad s \mapsto \abcd{1}{0}{1}{-1}
\end{gather*}
rather than the one of Proposition \ref{prop:D_4_mod_4}, then for the resulting Galois representation
\begin{gather*}
\rho' \colon \ G_{\Q} \twoheadrightarrow \Gal(L/\Q) \cong D_4 \hookrightarrow \GL_2(\Z/4\Z)
\end{gather*}
one finds by analysis similar to the proof of Proposition \ref{prop:D_4_gal_repr} that{\samepage
\begin{gather*}
\Tr \rho'(\Frob_\ell) = (a_\ell(f) \pmod{4})
\end{gather*}
for odd primes $\ell$.}

We do not know whether this representation is strongly modular in the sense of being the mod $4$ reduction of the $2$-adic Galois representation attached to some newform. But we can say that it is not isomorphic to the Galois representation of the $4$-torsion of an elliptic curve defined over $\Q$ (that is, if it is strongly modular, the reason is different than the one we found for the representation $\rho$ of Section~\ref{section:gal_repr}): Suppose that we had an elliptic curve~$C$ defined over~$\Q$ such that the representation of~$G_{\Q}$ on $C[4]$ were isomorphic to $\rho'$ above. In particular, $\Q(C[4]) = L = \Q\big( i,\sqrt[4]{2}\big)$. Consider a prime $\ell$ that is $\equiv 3\pmod{8}$ and sufficiently large so as not to divide the conductor of~$C$. Now, in the field $L$ such a prime has inertial degree $2$, i.e., the residue field will be $\F_{\ell^2}$. Since $C$ has good reduction at $\ell$, we find that $C[4]$ embeds into $C(\F_{\ell^2})$ and so we must have $\# C(\F_{\ell^2}) \equiv 0 \pmod{16}$. Put $a:=\ell+1 - \# C(\F_{\ell})$ and let $\alpha$, $\beta$ be the roots of $x^2-ax+\ell$. Then
\begin{gather*}
(\ell+1)^2 - a^2 = \ell^2 + 1 - (\alpha+\beta)^2 + 2\alpha\beta = \ell^2 + 1 - \alpha^2 - \beta^2 = \# C(\F_{\ell^2}) \equiv 0 \pmod{16},
\end{gather*}
whence $a\equiv 0 \pmod{4}$ since $\ell\equiv 3 \pmod{8}$. But $a = a_\ell(h)$ if $h$ is the newform attached to $C$. Thus,
\begin{gather*}
a_\ell(f) \equiv 2 \not\equiv 0 \equiv a_\ell(h) \pmod{4}
\end{gather*}
for all sufficiently large primes $\ell \equiv 3 \pmod{8}$.

{\bf 5.} Theorem 2 of \cite{ckw} attaches a Galois representation to a dc-weak eigenform mod $p^m$ provided that the Galois representation attached to the form mod $p$ is absolutely irreducible. ``Attached'' means here that the traces of Frobenii are almost all given by the corresponding Hecke eigenvalues. The same can be proven under an assumption of ``residually multiplicity-freeness'', as well as the Vandiver conjecture, by combining results of \cite{bellaiche_khare} and \cite[Proposition~1.6.1]{bellaiche_chenevier}. This, however, would still not apply to our example of $(f \pmod{4})$ (here, the semi-simplification of the attached residual representation is just the sum of two trivial characters.) Hence one can remark that in the case of $(f \pmod{4})$, we still have an attached Galois representation, cf.\ Proposition~\ref{prop:D_4_gal_repr}. Moreover, and as could be anticipated from \cite[Th\'{e}or\`{e}me~1]{carayol} (which is the real background for \cite[Theorem~2]{ckw}), this Galois representation is not uniquely determined by being ``attached'' to $(f \pmod{4})$ in the above sense, as the previous remark shows.

However, we must emphasize that the Galois representations are not our primary focus in the present paper: they are merely tools that allow us to identity newforms congruent to \smash{$(f \pmod{4})$}. Those congruences mod $4$ between $f$ and true characteristic $0$ eigenforms are the main focus of the paper, as noted in the motivating Section~\ref{section:background}.

{\bf 6.} We would like to note that Theorem \ref{thm:main} can be seen ``the other way around'', that is, as producing $f\pmod{4}$ given the form $g$ of level $128$. I.e., as an example of level-lowering modulo~$4$. As already noted in Section~\ref{section:background} above, it is known, at least when $p\ge 5$, that one can ``strip powers of $p$ away from the level'' of a strong eigenform modulo $p^m$ (cf.\ \cite[Proposition~19]{ckw} which is based on work of Hida.) However, the result of this kind of level-lowering seems naturally to be a~dc-weak eigenform at the lowered level (though, it seems, this can be strengthened by utilizing \cite[Theorem~4.3]{nadim_level1}.)

The thesis \cite{panos_thesis} contains results on level-lowering modulo higher prime powers (none of the results apply to the situation of Theorem~\ref{thm:main}), and the shape of these statements is that, after lowering the level, one ends up with a weak eigenform even if one started with a strong eigenform modulo~$p^m$. Theorem~\ref{thm:main} provides an example where the level-lowered eigenform is weak, but provably not strong at that level (which in this case is~$1$.) It is probably the first example of this kind.

{\bf 7.} The second author has determined all weak eigenforms on $\SL_2(\Z)$ with coefficients in~$\Z/4\Z$. There are $8$ in all, and, furthermore, they all lift to characteristic $0$ eigenforms of some $2$-power level and some weight (here, ``lift'' means that coefficients prime to some fixed $M$ are reductions mod~$4$ of the corresponding coefficients of the higher-level form.) One can determine these weak eigenforms by using the weight bounds that come from Nicolas--Serre theory, cf.\ \cite[Section~3.2]{krw}.

\subsection*{Acknowledgements} The authors would like to thank Shaunak Deo and Gabor Wiese for interesting discussions relating to this paper as well as to other questions concerning modular forms modulo prime powers. We thank Ariel Pacetti for comments on the first draft of the paper. We also thank the anonymous referees for comments and suggestions that helped improve the exposition.
The second author was supported by a Postdoctoral Fellowship at the National Center for Theoretical Sciences, Taipei, Taiwan.
The first author would like to thank Noriko Yui for good contact, collaboration, and interesting exchange over many years.

\pdfbookmark[1]{References}{ref}
\LastPageEnding

\end{document}